\documentclass[hidelinks]{article}
\usepackage{graphicx} 
\usepackage{hyperref}
\usepackage{amsthm}
\usepackage{geometry}
\usepackage{amssymb}
\usepackage{amsmath}
\usepackage{color}
\definecolor{purple}{rgb}{0.5,0,0.5}
\usepackage[T1]{fontenc}
\usepackage{lmodern}
\usepackage[english]{babel}

\newtheorem{theorem}{Theorem}[section]
\newtheorem{lemma}[theorem]{Lemma}

\newtheorem{claim}[theorem]{Claim}

\newtheorem{conjecture}[theorem]{Conjecture}

\theoremstyle{definition}

\newcommand{\claimproofstart}[1][Proof]{\begin{proof}[#1]
\renewcommand{\qedsymbol}{$\boxdot$}}
\newcommand\claimproofend{
\end{proof}
\renewcommand{\qedsymbol}{$\square$}}

\newcommand{\cA}{\mathcal{A}}

\newcommand{\cH}{\mathcal{H}}

\newcommand{\cG}{\mathcal{G}}
\newcommand{\cM}{\mathcal{M}}
\newcommand{\cF}{\mathcal{F}}
\newcommand{\cE}{\mathcal{E}}

\newcommand{\Var}{\mathrm{Var}}
\newcommand{\Inf}{\mathrm{Inf}}

\title{Probabilistic and game thresholds}

\author{Rajko Nenadov \footnote{School of Mathematics and Statistics, University of Canterbury, New Zealand. Research supported by the New Zealand Marsden Fund. Email: rajko.nenadov@canterbury.ac.nz}} 
\date{}

\begin{document}

\maketitle

\begin{abstract}
    Let $X$ be a finite set and $\cF \subseteq 2^X$ a non-trivial increasing property. We show that the bias-threshold $b(\cF)$ for the Picker-Chooser $\cF$-game satisfies $b(\cF) \ge \alpha / p_c(\cF)$, where $p_c(\cF)$ is the threshold for the property $\cF$ and $\alpha > 0$ is an absolute constant. This proves that Picker-Chooser games satisfy one-sided ``probabilistic intuition''. The proof is based on the notion of influence from the theory of boolean functions.
\end{abstract}

\section{Introduction}
Positional games are a large class of two-player games with full information. A game is described by a (usually finite) set $X$, called a \emph{board} (we shall assume $|X| \ge 2$ throughout the paper); a family $\cF \subseteq 2^X$, where $2^X$ denotes the power set of $X$, whose members are called \emph{winning sets}; a mechanism for selecting elements of $X$; and a rule determining which player wins the game. Starting with the papers of Hales and Jewett \cite{hales63} and Erd\H{o}s and Selfridge \cite{erdos73game}, these games have attracted wide attention. For a thorough introduction to the topic, see the monographs by Beck \cite{beck08book} and Hefetz, Krivelevich, Stojakovi\'c and Szab\'o \cite{hefetz14book}.

We study the following game mechanism, introduced by Beck \cite{beck02picker} under the name of \emph{Picker-Chooser} games (and more recently studied under the name \emph{Waiter-Client}). For a given \emph{bias} parameter $b \in \mathbb{N}$, the game proceeds in rounds as follows: In each round the first player picks $b$ previously unclaimed elements of $X$, and the second player chooses one of them and adds it to the initially empty set $S \subseteq X$. The remaining $b - 1$ elements are discarded from the game. The game finishes when there are fewer than $b$ elements remaining in $X$. The first player wins if, at the end of the game, it holds that $S \in \cF$. Note that this is a natural game analogue of the Achlioptas process \cite{achlioptas09science}, where in each round $b$ new elements are chosen uniformly at random and the player has to choose one. For some results about this game, see \cite{bednarska13weight,bednarska16manipulative,bednarsa14picker,hefetz16wc,hefetz17wcham,nenadov23probabilistic}.

It is clear that if $\cF$ is \emph{increasing}, in the sense that if $A \in \cF$ and $A \subseteq A' \subseteq X$ then $A' \in \cF$, then the game is monotone in $b$. That is, if the first player has a winning strategy with bias $b$, then they also have a winning strategy when playing with bias $1 \le b' < b$. Therefore, for every non-trivial increasing property $\cF$ we can define the \emph{threshold-bias} $b(\cF)$ as the largest value of $b$ for which the first player has a winning strategy. The study of biased positional games is largely concerned with determining such a threshold for various game mechanisms and goals.

Notice that, at the end of the game, $|S| = \lfloor |X| / b \rfloor$. A common heuristic for predicting which player has the winning strategy, known as the ``probabilistic intuition'', is to look at whether a \emph{random} set $S \subseteq X$ of density $1/b$ is likely to belong to $\cF$ or not. Our main result, Theorem \ref{thm:main}, essentially proves that one direction of this heuristic is always correct for Picker-Chooser games: If a random set of density $1/b$ is likely to belong to $\cF$, then Picker has a winning strategy after reducing the bias by a suitable absolute constant factor. As we shall discuss later, the converse does not always hold.

To state the result precisely, we introduce some notation. For $p \in [0,1]$, let $\mu_p$ be the product measure on $2^X$ given by $\mu_p(A) = p^{|A|} (1 - p)^{|X \setminus A|}$. A set $A \sim \mu_p$, in expectation, has density precisely $p$. We shall assume throughout that $\cF \subseteq 2^X$ is increasing and \emph{non-trivial}, that is, $\emptyset \not \in \cF$ and $X \in \cF$. It is well-known that in this case $\mu_p(\cF) = \sum_{A \in \cF} \mu_p(A)$ is continuous in $p$ and there exists a unique value $p \in [0,1]$ such that $\mu_p(\cF) = 1/2$. We call such a value of $p$ the \emph{threshold} and denote it by $p_c(\cF)$. The following is the main result of the paper.

\begin{theorem} \label{thm:main}
Let $X$ be a finite set and $\cF \subseteq 2^X$ a non-trivial increasing property. Then 
$$
    b(\cF) \ge (128 p_c(\cF))^{-1}.
$$
\end{theorem}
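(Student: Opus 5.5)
The plan is to give Picker an explicit strategy driven by a potential function. The potential is the conditional measure of $\cF$ under a product measure, and we bound how much it can drop over the game using influences.

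\textbf{Setup and choice of $p$.} Fix $b \le (128 p_c(\cF))^{-1}$ and set $p = 16 p_c(\cF)$, so that $pb \le 1/8$. We first boost the probability. If $A_1,\dots,A_m \sim \mu_{p_c}$ are independent, their union has law $\mu_{1-(1-p_c)^m}$, which is dominated by $\mu_{mp_c}$. Monotonicity then gives
$$
1-\mu_{mp_c}(\cF) \le (1-\mu_{p_c}(\cF))^m = 2^{-m},
$$
so $\mu_p(\cF) \ge 1 - 2^{-16}$.

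\textbf{The potential.} At any moment of the game let $S$ be Chooser's current set, $D$ the discarded elements and $R$ the unclaimed ones. Define
$$
\Phi = \Pr_{T \sim \mu_p}\bigl[\, S \cup (T \cap R) \in \cF \,\bigr],
$$
i.e.\ the measure of $\cF$ with $S$ forced in, $D$ forced out and $R$ random. Initially $\Phi = \mu_p(\cF)$. At the end, fewer than $b$ elements remain, and we plan to show $\Phi > 1 - (1-p)^{|R|}$ or to simply ensure $\Phi > 1/2$ up to these leftovers. Either way this forces $S \in \cF$, by treating the final fewer than $b$ leftover elements as discarded: $\Phi$ computed with them forced out is a $0/1$ quantity. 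Thus Picker's goal is to keep $\Phi$ bounded away from $0$.

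For $y \in R$ let $I_y$ be the influence of $y$ in the restricted function on $2^R$, namely the probability that toggling $y$ changes membership. Then:
\begin{itemize}
\item moving $y$ into $S$ raises $\Phi$ by $(1-p) I_y$;
\item moving $y$ into $D$ lowers $\Phi$ by $p I_y$.
\end{itemize}

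\textbf{Picker's strategy.} In each round Picker offers the $b$ elements of $R$ with the smallest influences. When Chooser takes $x$ and the other $b-1$ are discarded, the net change is $(1-p)I_x - p\sum_{y \ne x} I_y$. This is at least $-p\sum_{y} I_y$ over the offered set, which is at most $pb$ times the average influence over $R$. More carefully, we can use the gain from $x$ as well: every discarded element has influence at least $I_x$ only if ordered oppositely, so we should instead offer the $b$ elements of smallest influence and observe that the loss $p\sum_{y\neq x} I_y$ is paid at a rate proportional to the total influence, which by Russo--Margulis equals the derivative $\frac{d}{dq}$ of the restricted measure at $q=p$.

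\textbf{The key accounting step (main obstacle).} We must show that the cumulative loss over all $|X|/b$ rounds is at most a small constant. My approach is to compare the game to a continuous process in which each round, in effect, lowers the density on $R$ from $p$ toward $p(1 - b/|R|)$. The total loss should then telescope into something like
$$
\mu_p(\cF) - \mu_{p(1-\Theta(pb))}(\cF),
$$
or, if I use $\log \Phi$ instead of $\Phi$ to handle multiplicative drift, into a quantity controlled via the doubling bound $1-\mu_{2q} \le (1-\mu_q)^2$.

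The difficulty is that influences are recomputed after each move, so the ``smallest $b$'' selection must be shown not to be exploited adversarially by Chooser. If the direct telescoping fails, my fallback is to take the conditional expectation over a uniformly random $b$-subset of the $2b$ lowest-influence elements. This averaging argument shows that some offer keeps the loss at most $2pb$ times the mean influence, and the mean influence is then integrated against the boosted bound $\mu_p(\cF) \ge 1-2^{-16}$, with the constant $128$ absorbing the factors of $2$.
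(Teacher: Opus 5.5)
There is a genuine gap, located at the step you call the main obstacle. Nothing in the proposal bounds the cumulative loss, and the strategy you propose can actually lose.

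To begin with, the net change in a round is not $(1-p)I_x - p\sum_{y\neq x} I_y$. Once the rest of the offer is forced out, the gain from $x$ must be computed under that conditioning, and it can vanish. Take $X$ to be $m$ disjoint pairs $\{a_i,b_i\}$, and let $\cF$ consist of the sets containing some pair. Then $p_c(\cF)\approx\sqrt{\ln 2/m}$, so $(128p_c(\cF))^{-1}$ is of order $\sqrt m$. Picker does win at such a bias $b$: offer one element from each of $b$ fresh pairs for $b$ rounds, then offer the $b$ partners of Chooser's picks. But all untouched elements have equal influence at every stage, so your rule is free to offer only complete pairs. Chooser then takes one element, its partner and every other offered pair die, and no pair is ever activated. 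Chooser wins, while your $\Phi$ decays from about $1-2^{-256}$ to nearly $0$, so the boosting step does not rescue you.

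The fallback of averaging over $b$-subsets of the $2b$ lowest-influence elements does not help either. Those $2b$ elements may be $b$ complete pairs, and your loss bound is the same for every offer. In fact the good offer (distinct pairs) has a \emph{larger} discard loss than the bad one. What separates them is only the gain of the chosen element conditioned on the rest of the offer being discarded; in the bad offer, $D_{a_1}f=0$ once $b_1$ is discarded. So no argument that selects offers by unconditional influences and controls only $p\sum_y I_y$ can succeed, whether via Russo--Margulis, telescoping, or a continuous-time comparison. Even heuristically, a density reduction by a factor $1-b/|R|$ per round compounds, as $|R|$ shrinks from $|X|$ to below $b$, to a factor below $b/|X|$, not to $1-\Theta(pb)$.

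The paper avoids cumulative accounting entirely. It maintains, round by round, the invariant that the conditional measure of $\cF$ (with $S$ forced in and $D$ forced out) is at least $1/2$. In each round it re-tunes $p\le p_c(\cF)$ so that the restricted function has mean exactly $1/2$. The Poincar\'e inequality then gives total influence at least $1/(4p)$. This forces a dyadic class $B_\ell$, consisting of elements with influence in $(2^{-\ell},2^{-\ell+1}]$, with $|B_\ell|\ge 4b\ell$. Within $B_\ell$, a random-subset and Markov argument based on \eqref{eq:condition} (where $\ell\log 2$ bounds $\log(1/\Pr[D_jf(\xi)=1])$) yields $Q$ with $|Q|=b$ and $\Pr[\xi_{Q\setminus\{j\}}=0\mid D_jf(\xi)=1]\ge 1/8$ for every $j\in Q$.

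Hence the gain from $\xi_j=1$, conditioned on the rest of $Q$ being $0$, is at least $\Inf_j(f)/(8(1-p)^{b-2})$. The telescoped loss from discarding $Q\setminus\{j\}$ is at most $2pb\,\Inf_j(f)/(1-p)^{b-2}\le \Inf_j(f)/(8(1-p)^{b-2})$, because influences in $B_\ell$ agree up to a factor $2$. Whatever Chooser picks, the invariant survives, and monotonicity in $p$ transfers it back to $p_c(\cF)$. Your proposal lacks these two ingredients: an offer of comparable influences, and a lower bound on the conditional gain of every offered element.
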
   

The main appeal of Theorem \ref{thm:main} is that it applies with no restrictions on a family $\cF$ (other than the most natural ones), and it allows one to study a significantly easier probabilistic problem in order to determine a lower bound on the bias-threshold. Indeed, determining $p_c(\cF)$ for various properties is one of the central topics in probabilistic combinatorics. 

Before we give examples of applications of Theorem \ref{thm:main} and discuss its optimality, we first give a \emph{first-moment} upper bound on $b(\cF)$.

\paragraph{First-moment upper bound.}

We say that $\cG \subseteq 2^X$ \emph{covers} $\cF$ if for every $A \in \cF$ there exists $B \in \cG$ such that $B \subseteq A$. A family  $\cF$ is $q$-small, for $q \in [0,1]$, if there exists $\cG$ which covers $\cF$ and
$$
    \sum_{T \in \cG} q^{|T|} \le 1/2.
$$
 Such a family $\cG$ provides a first-moment bound on the probability that $A \sim \mu_q$ belongs to $\cF$. The \emph{expectation-threshold} $q(\cF)$ is then defined as the largest value of $q$ such that $\cF$ is $q$-small. 
 
 It is clear that $q(\cF) \le p_c(\cF)$, and the Kahn-Kalai conjecture \cite{kahn07threshold} states that $p_c(\cF) \le C q(\cF) \log(|X|)$, for an absolute constant $C > 0$. Building on the advances in the Sunflower Conjecture \cite{alweiss21sunflower} and a fractional version of the Kahn-Kalai conjecture \cite{frankston21fractional}, this was recently resolved by Park and Pham \cite{park24expectation}. With this in mind, Theorem \ref{thm:main} implies
$$
    b(\cF) \ge \alpha / (q(\cF) \log(|X|)),
$$
for an absolute constant $\alpha > 0$. The following follows from the Erd\H{o}s-Selfridge potential method \cite{erdos73game}; for completeness, we include a proof.

\begin{lemma} \label{lemma:upper_bound}
    Let $X$ be a finite set and $\cF \subseteq 2^X$ a non-trivial increasing property. Then 
    $$
        b(\cF) < q(\cF)^{-1}.
    $$
\end{lemma}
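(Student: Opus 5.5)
We have to show that if $b \ge q(\cF)^{-1}$, then Chooser has a strategy ensuring $S \notin \cF$. Set $q = q(\cF)$ and take a family $\cG$ that covers $\cF$ and satisfies $\sum_{T \in \cG} q^{|T|} \le 1/2$; such a family exists because $\cF$ is $q$-small. A finite sum of this form is continuous in $q$, so the maximum is attained. Since $\cG$ covers $\cF$, it suffices for Chooser to make sure that at the end of the game no $T \in \cG$ satisfies $T \subseteq S$. We may also assume $\emptyset \notin \cG$, because otherwise the sum would be at least $1$. Note that $b \ge 1/q$ gives $1/b \le q$.

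Chooser uses an Erd\H{o}s--Selfridge potential. Consider any moment of the game at which $T \cap D = \emptyset$ for every $T \in \cG$ that is still alive, where $D$ is the set of discarded elements. For such a moment define
$$
\Phi = \sum_{T \in \cG,\, T \cap D = \emptyset} b^{|S \cap T| - |T|},
$$
summing over the still-alive sets $T$, that is, those not yet containing a discarded element. Initially $S = D = \emptyset$, so
$$
\Phi_0 = \sum_{T \in \cG} b^{-|T|} \le \sum_{T\in\cG} q^{|T|} \le 1/2,
$$
using $b^{-1} \le q$. If at the end some $T \in \cG$ satisfies $T \subseteq S$, then $T$ is alive and contributes $b^0 = 1$, so $\Phi \ge 1$. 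It is therefore enough to show that Chooser can keep $\Phi$ from increasing.

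Suppose Picker offers elements $x_1,\dots,x_b$ in some round, and Chooser keeps $x_i$. Then every alive $T$ containing $x_i$ has its weight multiplied by $b$. Every alive $T$ that contains some $x_j$ with $j \ne i$ is killed, because $x_j$ is discarded. For each $j$, let $w_j$ be the total weight of the alive sets containing $x_j$. The change in $\Phi$ is then at most $(b-1)w_i - \sum_{j \ne i} w'_j$, where $w'_j$ is the weight of the sets killed because of $x_j$. Here some care is needed with sets containing several of the offered elements. A set containing both $x_i$ and some $x_j$ with $j \ne i$ is killed, so it does not gain weight. The cleanest bound is therefore that the new potential is at most $\Phi - \sum_{T \ni \text{some } x_j} w(T) + b\, w_i$. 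The sets meeting $\{x_1,\dots,x_b\}$ have total weight at least $\max_j w_j$, and also at least $\frac{1}{b}\sum_j w_j$ after accounting for overlaps. Chooser picks $i$ minimizing $w_i$. Then $b\, w_i \le \sum_j w_j$, but overlaps can make $\sum_j w_j$ exceed the weight of the union, and this is the main obstacle. To handle it, I would refine the bound. The new weight is at most $b \cdot w(\{T : T \ni x_i,\ T \cap \{x_j\}_{j\ne i} = \emptyset\})$, and the removed weight is $w(\bigcup_j \{T \ni x_j\})$. Let $u_j$ denote the weight of the alive sets meeting the offered elements only in $x_j$. The union weight is at least $\sum_j u_j$, and the gain is at most $b\, u_i$. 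Choosing $i$ to minimize $u_i$ gives $b\, u_i \le \sum_j u_j \le$ the removed weight. So $\Phi$ does not increase.

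Finally, the elements left over at the end of the game are never added to $S$, so they are harmless. Hence $\Phi_{\mathrm{end}} \le 1/2 < 1$, and no $T \in \cG$ is contained in $S$. Since $\cG$ covers $\cF$, it follows that $S \notin \cF$. Thus Picker has no winning strategy for any $b \ge q(\cF)^{-1}$, which gives $b(\cF) < q(\cF)^{-1}$. (If $q(\cF) = 0$ the bound is vacuous, and non-triviality ensures $q(\cF) < 1$.)
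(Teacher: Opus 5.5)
Your proof is correct and follows essentially the paper's argument: the same potential $\sum_{T\cap D=\emptyset} b^{-|T\setminus S|}$, the same split of alive sets according to whether $|T\cap Q|$ is $0$, $1$, or at least $2$, and a choice of the element minimizing $u_i$ that is equivalent to the paper's averaging over the $b$ possible choices. You can delete the exploratory discussion of overlaps before you introduce $u_j$, since the chain $b\min_j u_j \le \sum_j u_j \le$ (removed weight) is all the argument needs.
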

\begin{proof}
    Let $b \ge q(\cF)^{-1}$ be an integer, and choose a cover $\cG$ of $\cF$ such that $\sum_{T \in \cG} q(\cF)^{|T|} \le 1/2$. If $S$ is the current set chosen by the second player and $D$ is the set of discarded elements, consider the potential
    $$
        \Phi(S,D) = \sum_{\substack{T \in \cG\\ T \cap D = \emptyset}} b^{-|T \setminus S|}.
    $$
    Initially, $\Phi(\emptyset, \emptyset) \le 1/2$. For any offer $Q$ of size $b$, average the potential after the round over the $b$ possible choices. The contribution of a set $T$ disjoint from $D$ is unchanged if $|T \cap Q|=0$, unchanged on average if $|T \cap Q|=1$, and zero if $|T \cap Q|\ge2$. Thus the second player can always choose an element for which the potential does not increase. If $S \in \cF$, some $T \in \cG$ satisfies $T \subseteq S$ and contributes $1$, a contradiction. Hence the second player wins for every integer $b \ge q(\cF)^{-1}$.
\end{proof}

The previous lemma, together with the resolution of the Kahn-Kalai conjecture, shows that the bound in Theorem \ref{thm:main} is always optimal up to $O(\log |X|)$ factor. 

\subsection{Applications}

Estimating thresholds for various properties is a central topic of probabilistic combinatorics. This provides us with a plethora of interesting applications of Theorem \ref{thm:main}.

\paragraph{Powers of Hamilton cycles.} Let $X$ denote the edges of $K_n$, the complete graph with $n$ vertices, and let $\cH^{(k)}$ denote the property of containing the \emph{$k$-th power} of a Hamilton cycle, for some fixed $k \in \mathbb{N}$. This is defined as the cycle on $n$ vertices, where we additionally put an edge between any two vertices at distance at most $k$. Powers of Hamilton cycles were the topic of Seymour's conjecture \cite{seymour74problem}, famously resolved for sufficiently large graphs by Koml\'os, S\'ark\"ozy and Szemer\'edi \cite{komlos98seymour} using the blow-up lemma and the regularity method. It follows from a general result of Riordan \cite{kuhn12powers,riordan00spanning} that $p_c(\cH^{(k)}) = O_k(n^{-1/k})$ in the case $k \ge 3$. The case $k = 2$ turned out to be more challenging, and it was only obtained recently by Kahn, Narayanan, and Park \cite{kahn21square} that $p_c(\cH^{(2)}) = O(n^{-1/2})$. One easily computes $q(\cH^{(k)}) = \Omega_k(n^{-1/k})$, which provides the matching lower bound. Therefore, Theorem \ref{thm:main} and Lemma \ref{lemma:upper_bound} imply the following. 

\begin{theorem}
    For every integer $k \ge 2$ there exist $c, C > 0$ such that the following holds. Let $n$ be sufficiently large, let $X$ denote the edges of $K_n$ and let $\cH^{(k)}$ denote the property of containing the $k$-th power of a Hamilton cycle. Then 
    $$
        c n^{1/k} \le b(\cH^{(k)}) \le C n^{1/k}.
    $$
\end{theorem}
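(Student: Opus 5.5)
The plan is to derive both inequalities directly from Theorem \ref{thm:main} and Lemma \ref{lemma:upper_bound}, using the two threshold estimates quoted above. For the lower bound, the results of Riordan \cite{riordan00spanning,kuhn12powers} (for $k \ge 3$) and of Kahn, Narayanan and Park \cite{kahn21square} (for $k=2$) give a constant $C_k$ with $p_c(\cH^{(k)}) \le C_k n^{-1/k}$ for all sufficiently large $n$. First I check that $\cH^{(k)}$ is a non-trivial increasing property: containing a fixed spanning structure is preserved under adding edges; $\emptyset \notin \cH^{(k)}$; and $X \in \cH^{(k)}$ as soon as $n \ge 2k+1$, say. Theorem \ref{thm:main} then yields
$$
b(\cH^{(k)}) \ge (128\, C_k n^{-1/k})^{-1} = c\, n^{1/k}, \qquad c = (128 C_k)^{-1}.
$$

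For the upper bound, I apply Lemma \ref{lemma:upper_bound}, which gives $b(\cH^{(k)}) < q(\cH^{(k)})^{-1}$. It therefore suffices to show $q(\cH^{(k)}) \ge c' n^{-1/k}$. Take as cover $\cG$ the family of all edge sets of copies of the $k$-th power of a Hamilton cycle in $K_n$. Each such copy has exactly $kn$ edges, for $n > 2k$. The number of copies is at most $n!$, since each copy is determined by a cyclic ordering of the vertices. Hence, with $q = c' n^{-1/k}$,
$$
\sum_{T \in \cG} q^{|T|} \le n!\, (c')^{kn} n^{-n} \le \left(e^{-1} (c')^{k}\right)^n \cdot e\sqrt{n},
$$
using $n! \le e\sqrt n\,(n/e)^n$. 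This is at most $1/2$ once $c'$ is a small constant (for instance $c' = 1$) and $n$ is large. So $\cH^{(k)}$ is $q$-small, giving $q(\cH^{(k)}) \ge c' n^{-1/k}$ and hence $b(\cH^{(k)}) \le C n^{1/k}$ with $C = 1/c'$.

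The main obstacle is the lower bound on $p_c$, which here is imported wholesale from the literature. For $k=2$ this is the deep result of \cite{kahn21square}. For $k\ge3$ I would need to check that Riordan's general spanning-subgraph theorem applies with the right parameters: the $k$-th power of a Hamilton cycle has maximum density parameter $\gamma$ roughly $k$, giving threshold $n^{-1/k}$ up to constants once $k \ge 3$. The rest is routine bookkeeping.
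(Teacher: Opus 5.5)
Your proposal is correct and follows the paper's route exactly. The lower bound comes from Theorem~\ref{thm:main} combined with the cited estimates $p_c(\cH^{(k)}) = O_k(n^{-1/k})$ of Riordan and Kahn--Narayanan--Park. The upper bound comes from Lemma~\ref{lemma:upper_bound} together with the first-moment estimate $q(\cH^{(k)}) = \Omega(n^{-1/k})$, which the paper calls easy and which you carry out correctly via the cover by all copies of the $k$-th power and the bound $n!\,q^{kn} \le e\sqrt{n}\,\bigl(e^{-1}(c')^k\bigr)^n$. One wording slip: the input imported from the literature is an \emph{upper} bound on $p_c$, not a lower bound.
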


The reason why this is interesting is that the corresponding thresholds for $k \ge 2$ are proved using genuine probabilistic ideas, rather than through combinatorial means. In particular, they provide no insight into how to find a power of a Hamilton cycle, but merely show that one is likely to exist. It should then come as a surprise that, without any knowledge about how these powers are created in the random setting, one can transfer the threshold to an entirely deterministic setting.

\paragraph{Perfect matchings in hypergraphs.}

Let now $X$ denote the edges of $K_n^{(3)}$, the complete $3$-uniform hypergraph with $n$ vertices. Let $\cM$ denote the property of containing a matching of maximum possible size, that is, the property that there exists a subset of $\lfloor n/3 \rfloor$ vertex-disjoint hyperedges (a perfect matching when $3 \mid n$). Determining the threshold $p_c(\cM)$ was a notoriously difficult problem, known as Shamir's problem \cite{schmidt83matching}. The problem was finally solved by Johansson, Kahn, and Vu \cite{johansson08factor}, who confirmed that $p_c(\cM) = \Theta(\log(n) / n^2)$. It is remarkable that this result also follows now from the resolution of the Kahn-Kalai conjecture. The very reason why the Kahn-Kalai conjecture was considered to be a difficult problem, other than the generality in which it is stated, is precisely because it would imply a solution to Shamir's problem.

As in the proof of the threshold for powers of Hamilton cycles, proofs of the threshold for $\cM$ are entirely probabilistic in nature. This is even more emphasised here. In the case of powers of Hamilton cycles, there are combinatorial proofs \cite{fischer22triangle,nenadov19powers} which show that one can find these structures at densities of the form $(\log n)^{O(1)} / n^{1/k}$. These arguments are based on the ideas of \emph{absorbers} and, while they do employ probabilistic techniques, they do so largely to establish a particular local combinatorial property which is then used to find the desired structure. To the best of the author's knowledge, the only combinatorial arguments for finding perfect matchings in random hypergraphs are based on Hall's matching theorem and require density $p$ of order at least $1 / n$, which is significantly higher than $p_c(\cM)$. Similarly as in the case of powers of Hamilton cycles, we find it rather surprising that the threshold for perfect matchings in the random setting can be transferred to the deterministic setting of Picker-Chooser games.

\begin{theorem}
    There exist absolute constants $c,C>0$ such that the following holds for all sufficiently large $n$. Let $X$ denote the edges of $K_n^{(3)}$, and let $\cM$ denote the property of containing $\lfloor n/3 \rfloor$ vertex-disjoint edges. Then
    $$
        c n^2 / \log n \le b(\cM) \le C n^2.
    $$
\end{theorem}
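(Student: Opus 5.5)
The plan is to obtain the two bounds separately: the lower bound from Theorem \ref{thm:main} together with the Johansson--Kahn--Vu threshold, and the upper bound from Lemma \ref{lemma:upper_bound} together with a direct first-moment estimate of $q(\cM)$.

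\textbf{Lower bound.} Let $X = E(K_n^{(3)})$. The property $\cM$ is increasing. It is non-trivial for $n \ge 3$: the empty set contains no matching of size $\lfloor n/3 \rfloor \ge 1$, and the complete hypergraph does. By \cite{johansson08factor}, or alternatively by Park--Pham \cite{park24expectation} combined with the bound on $q(\cM)$ below, there is an absolute constant $C_0$ with $p_c(\cM) \le C_0 \log n / n^2$ for all large $n$. Theorem \ref{thm:main} then gives
$$
    b(\cM) \ge (128\, p_c(\cM))^{-1} \ge n^2/(128 C_0 \log n),
$$
so we may take $c = (128 C_0)^{-1}$.

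\textbf{Upper bound.} By Lemma \ref{lemma:upper_bound}, $b(\cM) < q(\cM)^{-1}$, so it suffices to show that $\cM$ is $q$-small for $q = \varepsilon/n^2$ with a small absolute $\varepsilon > 0$. This gives $q(\cM) \ge \varepsilon/n^2$ and hence $b(\cM) \le n^2/\varepsilon$.

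For the cover, take $\cG$ to be the family of all matchings of size $m = \lfloor n/3 \rfloor$ in $K_n^{(3)}$. Every $A \in \cM$ contains such a matching, so $\cG$ covers $\cM$. The number of such matchings is at most
$$
    \binom{n}{3m}\frac{(3m)!}{(3!)^m\, m!}.
$$
Writing $\binom{n}{3m}(3m)! = n!/(n-3m)!$, this is at most
$$
    \frac{n^{3m}}{6^m\, m!} \le \left(\frac{e\, n^3}{6m}\right)^m,
$$
using $m! \ge (m/e)^m$. Since $m \ge n/3 - 1$, this is at most $(C_1 n^2)^m$ for an absolute constant $C_1$. Therefore
$$
    \sum_{T \in \cG} q^{|T|} \le (C_1 n^2 \cdot \varepsilon/n^2)^m = (C_1\varepsilon)^m \le 1/2
$$
once $\varepsilon \le 1/(2C_1)$ and $m \ge 1$.

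\textbf{Where the difficulty lies.} The only substantive input is the bound $p_c(\cM) = O(\log n/n^2)$, which is the deep result \cite{johansson08factor} and is quoted rather than reproved. Everything else is routine counting. The one point needing care is handling $\lfloor n/3 \rfloor$ when $3 \nmid n$: we count matchings of size exactly $m$, so the argument covers both cases uniformly.
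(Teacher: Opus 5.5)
Your proposal is correct and follows the same route as the paper. The lower bound is Theorem \ref{thm:main} combined with the Johansson--Kahn--Vu bound $p_c(\cM) = O(\log n / n^2)$, and the upper bound is Lemma \ref{lemma:upper_bound} combined with your first-moment count over all $\lfloor n/3 \rfloor$-matchings, which is exactly the ``straightforward estimate of $q(\cM)$'' the paper refers to.

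One minor slip is in an aside nothing depends on. Deriving $p_c(\cM) = O(\log n/n^2)$ from Park--Pham needs the \emph{upper} bound $q(\cM) = O(n^{-2})$, for example via the $O(n^{-2})$-spreadness of a uniformly random perfect matching. The lower bound $q(\cM) \ge \varepsilon/n^2$ you prove does not give it, so that parenthetical does not work as written; since you cite Johansson--Kahn--Vu directly, your argument is unaffected.
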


The upper bound in the previous result comes from a straightforward estimate of $q(\cM)$. It remains a challenging problem to determine the order of $b(\cM)$.

\paragraph{Hamilton cycles.}

As remarked earlier, Lemma \ref{lemma:upper_bound} and the resolution of the Kahn-Kalai conjecture imply that Theorem \ref{thm:main} is off by a factor at most $O(\log(|X|))$. We now describe an example where it is indeed off by this much. Namely, let $X$ again denote the edges of $K_n$, and this time let $\cH$ denote the property of containing a Hamilton cycle. It is a classical result of P\'osa \cite{posa76hamilton} that $p_c(\cH) = \Theta(\log(n) / n)$, and so Theorem \ref{thm:main} implies $b(\cH) \ge \alpha n / \log(n)$ for an absolute constant $\alpha$. However, a result of Bednarska-Bzd{\k{e}}ga, Hefetz, Krivelevich and \L uczak \cite{bednarska16manipulative} shows $b(\cH) = \Omega(n)$, which matches the bound $q(\cH)^{-1} = O(n)$.

\subsection{Further remarks}

It remains an interesting question whether one can further refine the lower bound in Theorem \ref{thm:main}. With the previous examples in mind, the following seems plausible. If true, this would give a precise characterisation of the bias threshold.

\begin{conjecture}
    Let $X$ be a finite set and $\cF \subseteq 2^X$ a non-trivial increasing property. Then
    $$
        b(\cF) \ge \alpha q(\cF)^{-1},
    $$
    for some absolute constant $\alpha > 0$.
\end{conjecture}

\paragraph{Motivation.} Finally, let us briefly discuss the author's motivation for studying this question. Resolving a conjecture of  Bednarska-Bzd{\k{e}}ga, Hefetz, and {\L}uczak \cite{bednarsa14picker}, the author \cite{nenadov23probabilistic} determined the bias-threshold for the Picker-Chooser $H$-graph game. More precisely, let $X$ correspond to the edges of $K_n$ and let $\cF_H$ denote the property of containing $H$ as a subgraph. The result from \cite{nenadov23probabilistic} states that there exists $\alpha_H > 0$, depending on $H$, such that
$$
    b(\cF_H) \ge \alpha_H n^{1/m(H)}
$$
where $m(H)$ denotes the maximum density of $H$, and is defined as
$$
    m(H) = \max \left\{ \frac{e(H')}{v(H')} \; : \; H' \subseteq H, e(H') \ge 1 \right\}.
$$
By the result of Bollob\'as \cite{bollobas81smallsubgraphs}, the same bound on $b(\cF_H)$ is now also implied by Theorem \ref{thm:main} (likely with a much better constant factor). The corresponding upper bound, up to the constant $\alpha_H$, again follows from $q(\cF_H) = \Omega_H(n^{-1/m(H)})$. 

The proof from \cite{nenadov23probabilistic} proceeds by building copies inductively on the number of edges of $H$. As a side-product, it provides (though this has not been written anywhere) an alternative proof of a Janson-type bound on the probability that the random graph $G(n,p)$ contains a copy of $H$. Somewhat far-fetched, the author's hope was that transferring a result about general increasing properties to the Picker-Chooser setting would shed some light on the probability of the random subset having the property. While some insight has been gained, namely Lemma \ref{lemma:main}, this remains wide open.

\paragraph{Use of AI.} The proof was developed with assistance from ChatGPT Astra. The author takes responsibility for the mathematical claims and references. AI assistance was also used in revising the exposition.

\section{Preliminaries}

Even though the game is entirely deterministic, we shall make use of the probabilistic method and estimates. The main machinery in our proof is based on the idea of \emph{influence} from the theory of Boolean functions. In this section we lay out the necessary background.

\paragraph{Probabilistic estimates.} All logarithms are natural. Let $Y$ be a non-negative random variable and $\mathcal{E}$ an event of positive probability in some probability space. Then
\begin{equation} \label{eq:exp_cond}
    \mathbb{E}[Y \mid \mathcal{E}] \le \frac{\mathbb{E}[Y]}{\Pr(\cE)}.
\end{equation}

For $p \in [0,1]$ and a finite set $U$, let $U_p$ denote the product probability distribution over $\{0, 1\}^U$ of Bernoulli($p$) random variables. For $C \subseteq U$ and $x \in \{0,1\}^U$, let $Z_C(x) = \sum_{i \in C} x_i$, and let $\cA \subseteq \{0,1\}^U$ such that
$$
    a := \Pr_{\xi \sim U_p}[\xi \in \cA] > 0.
$$
Then, for $\xi = (\xi_i)_{i \in U} \sim U_p$, we have
\begin{equation} \label{eq:condition}
    \mathbb{E} \left[ Z_C(\xi) \mid \xi \in \cA \right] \le \ln(1 / a) + (e-1) |C| p.
\end{equation}
This bound follows from
$$
    e^{\mathbb{E}[Z_C(\xi) \mid \xi \in \cA]} \le 
    \mathbb{E}\left[e^{Z_C(\xi)} \mid \xi \in \cA \right] \stackrel{\eqref{eq:exp_cond}}{\le} 
    \frac{\mathbb{E}\left[e^{Z_C(\xi)} \right]}{a} \le 
    \frac{1}{a} (1 + (e-1)p)^{|C|},
$$
where the first inequality is an application of Jensen's inequality. Applying $\ln(1 + x) \le x$ for every $x > -1$ yields \eqref{eq:condition}.

\paragraph{Boolean functions.} We state some basic facts from the theory of Boolean functions. For a thorough introduction, see the monograph by O'Donnell \cite{donnell2014analysis}. 

Let $U$ be a finite set and $f : \{0, 1\}^U \to \{0,1\}$ an increasing function. For $i \in U$, the ``discrete derivative'' $D_i f(x)$ is defined as
$$
    D_i f(x) = f(x^{i \to 1}) - f(x^{i \to 0}),
$$
where $x^{i \to k}$ is obtained from $x$ by setting the value of $x_i$ to $k$. The \emph{$p$-influence} of $i$ is then defined as
$$
    \Inf_i(f) = \mathbb{E} \left[ D_i f(\xi) \right] = \Pr\left[ D_i f(\xi) = 1 \right],
$$
where $\xi \sim U_p$. In other words, $\Inf_i(f)$ is the probability that changing the $i$-th coordinate also changes the outcome of the function. For $0<p<1$, by the law of total expectation, we have
$$
    \mathbb{E}[ f(\xi) ] = p \; \mathbb{E}[f(\xi) \mid \xi_i = 1] + (1 - p) \; \mathbb{E}[f(\xi) \mid \xi_i = 0].
$$
This implies
\begin{equation} \label{eq:X_1}
    \mathbb{E} \left[ f(\xi) \mid \xi_i = 1 \right] - \mathbb{E}[f(\xi)] = (1 - p) \Inf_i(f)
\end{equation}
and
\begin{equation} \label{eq:X_0}
    \mathbb{E}[f(\xi)] - \mathbb{E}\left[ f(\xi) \mid \xi_i = 0 \right]  = p \; \Inf_i(f).
\end{equation}

The following is a variant of the discrete Poincar\'e inequality (e.g.~see \cite[Chapter 8, Propositions 8.23--8.24]{donnell2014analysis}) that we shall use here.

\begin{lemma}
    Let $U$ be a finite non-empty set and $f : \{0,1\}^U \to \{0,1\}$ an increasing function. For $p \in [0,1]$ and $\xi \sim U_p$, we have
    \begin{equation} \label{eq:var_influence}
        \Var[f(\xi)] \le p(1-p) \sum_{i \in U} \Inf_i(f).
    \end{equation}
\end{lemma}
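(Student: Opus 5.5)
We prove \eqref{eq:var_influence} by the standard Efron--Stein (martingale) decomposition of the variance, followed by a one-coordinate computation that uses the increasing property and the Boolean range of $f$. If $p\in\{0,1\}$, then $\xi$ is deterministic, so $\Var[f(\xi)]=0$ and there is nothing to prove; assume $0<p<1$.

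Order $U=\{1,\dots,n\}$. Let $\cF_j$ be the $\sigma$-algebra generated by $\xi_1,\dots,\xi_j$, and set $M_j=\mathbb{E}[f(\xi)\mid \cF_j]$. Then $M_0=\mathbb{E}[f(\xi)]$ and $M_n=f(\xi)$. By orthogonality of martingale increments,
$$
\Var[f(\xi)]=\sum_{j=1}^n \mathbb{E}\left[(M_j-M_{j-1})^2\right].
$$
It therefore suffices to show that each increment satisfies $\mathbb{E}[(M_j-M_{j-1})^2]\le p(1-p)\Inf_j(f)$.

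Fix $j$ and condition on $\xi_1,\dots,\xi_{j-1}$. Define $g_k=\mathbb{E}[f(\xi)\mid \cF_{j-1},\xi_j=k]$ for $k\in\{0,1\}$. Then $M_{j-1}=pg_1+(1-p)g_0$. The increment equals $(1-p)(g_1-g_0)$ when $\xi_j=1$ and $-p(g_1-g_0)$ when $\xi_j=0$. Averaging over $\xi_j$ gives
$$
\mathbb{E}\left[(M_j-M_{j-1})^2\mid\cF_{j-1}\right]=p(1-p)(g_1-g_0)^2.
$$

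Next, $g_1-g_0=\mathbb{E}[D_jf(\xi)\mid \xi_1,\dots,\xi_{j-1}]$. This holds because $D_jf(\xi)$ does not depend on $\xi_j$ and the coordinates are independent. Since $f$ is increasing and $\{0,1\}$-valued, $D_jf\in\{0,1\}$, so $g_1-g_0\in[0,1]$. Hence $(g_1-g_0)^2\le g_1-g_0$. Taking expectations over $\xi_1,\dots,\xi_{j-1}$ yields
$$
\mathbb{E}\left[(M_j-M_{j-1})^2\right]\le p(1-p)\,\mathbb{E}[D_jf(\xi)]=p(1-p)\Inf_j(f).
$$
Summing over $j$ gives \eqref{eq:var_influence}.

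The only step requiring care is identifying $g_1-g_0$ with a conditional expectation of $D_jf$, which relies on independence of the coordinates. The key point is that monotonicity gives $D_jf\in\{0,1\}$, which is what allows us to replace $(g_1-g_0)^2$ by $g_1-g_0$ and obtain the influence without a square.
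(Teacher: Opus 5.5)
Your proof is correct and is essentially the paper's argument: the Doob-martingale telescoping over coordinates is the unrolled form of the paper's induction on $|U|$ via the law of total variance, which peels off one coordinate at a time and recombines the conditional influences through $\Inf_i(f)=(1-p)\Inf_i(f_0)+p\,\Inf_i(f_1)$. In both, the decisive step is the same: a (conditional) influence lies in $[0,1]$ because $D_jf\in\{0,1\}$, so its square is bounded by itself.
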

\begin{proof}
  We prove the lemma by induction on the size of $U$. For $|U| = 1$, the claim is immediate. Let $q \in U$ and set $U' = U \setminus \{q\}$. Suppose that the claim holds for $U'$ and any increasing Boolean function on $U'$. 
  
  For $t\in\{0,1\}$ and $x \in \{0,1\}^{U'}$, define $f_t(x)=f(x,t)$. Note that both functions are increasing. Let $\xi' \sim U'_p$ and $\xi_q \sim \textrm{Bernoulli}(p)$ be independent, and note that $\xi = (\xi', \xi_q) \sim U_p$. By the law of total variance,
  \begin{align*}
      \Var[f(\xi)] &= (1-p)\Var[f_0(\xi')] + p\Var[f_1(\xi')] + p(1-p)
      \bigl(\mathbb{E}[f_1(\xi')]
            -\mathbb{E}[f_0(\xi')]\bigr)^2 \\
      &= (1-p)\Var[f_0(\xi')] + p\Var[f_1(\xi')] + p(1-p) \; \Inf_q(f)^2.
  \end{align*}
  For every $i \in U'$, independence and the law of total expectation give
  \begin{align*}  
      \Inf_i(f) &= (1-p)\mathbb{E}[D_i f_0(\xi')] + p\mathbb{E}[D_i f_1(\xi')] \\
      &= (1 - p) \Inf_i(f_0) + p\; \Inf_i(f_1).
  \end{align*}
  Applying the induction hypothesis to $f_0$ and $f_1$, we obtain
  $$
      \Var[f(\xi)] \le p(1-p)\left( \sum_{i \in U'}\Inf_i(f) + \Inf_q(f)^2 \right)
      \le p(1-p)\sum_{i \in U} \Inf_i(f),
  $$
  where the final inequality uses $0 \le \Inf_q (f) \le1$.
  \end{proof}

\section{Proof}

The following lemma is the key ingredient of the proof of Theorem \ref{thm:main}.

\begin{lemma} \label{lemma:main}
    Let $U$ be a finite set and $f : \{0, 1\}^U \to \{0, 1\}$ an increasing function. Let $p \in (0,1)$ be such that $\mathbb{E}[f(\xi)] = 1/2$ for $\xi \sim U_p$. Then, for every integer $1 \le b \le (64 p)^{-1}$, there exists a set $Q \subseteq U$ of size $|Q| = b$ such that, for every $j \in Q$,
    $$
        \mathbb{E} \left[ f(\xi) \mid  \xi_j = 1 \text{ and } \xi_i = 0 \text{ for $i \in Q \setminus \{j\}$} \right] \ge 1/2.
    $$    
\end{lemma}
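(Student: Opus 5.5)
The plan is to build a slightly larger candidate set, show that many of its elements satisfy the required inequality, and then prune. The pruning step rests on monotonicity. For $j \in R$ write
$$
h_j(R) = \mathbb{E}\left[ f(\xi) \mid \xi_j = 1,\ \xi_i = 0 \text{ for } i \in R \setminus \{j\} \right].
$$
If $j \in Q \subseteq R$, then $h_j(Q) \ge h_j(R)$. To see this, release the coordinates of $R \setminus Q$ from the value $0$ one at a time. Since $f$ is increasing, averaging a coordinate over Bernoulli($p$) gives a value at least that obtained by fixing it to $0$, as in \eqref{eq:X_0} applied to the restricted function. Hence it suffices to find a set $R$ with $|R| \ge b$ in which at least $b$ elements $j$ satisfy $h_j(R) \ge 1/2$. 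We then let $Q$ be any $b$ of these good elements; each remains good in $Q$.

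Next I would find many good elements using total influence. The function is balanced, so $\Var[f(\xi)] = 1/4$, and \eqref{eq:var_influence} gives $\sum_{i \in U} \Inf_i(f) \ge 1/(4p(1-p)) \ge 1/(4p)$. By \eqref{eq:X_1}, fixing $\xi_j = 1$ raises the expectation from $1/2$ to $1/2 + (1-p)\Inf_j(f)$. Fixing the other coordinates of $R$ to $0$ one at a time costs, by \eqref{eq:X_0} applied to successive restrictions, at most $p$ times the influence of that coordinate in the current restricted function. So $j$ is good provided
$$
p \sum_{i \in R \setminus \{j\}} \Inf_i\bigl(f \text{ restricted}\bigr) \le (1-p)\Inf_j(f).
$$
I would choose $R$ randomly, drawing $2b$ coordinates independently with probability proportional to $\Inf_i(f)$. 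A typical drawn $j$ then has influence comparable to the influence-weighted average. The expected total loss is roughly $2b \cdot p \cdot \mathbb{E}[\text{influence of a random draw}]$. Because $b \le (64p)^{-1}$, this loss is a small constant fraction of the gain. Markov's inequality, applied to the loss-to-gain ratio averaged over $j \in R$, should then give at least half of $R$ good in expectation, i.e.\ at least $b$ good elements for some outcome. An alternative is to bound the number of zeroed coordinates that matter via \eqref{eq:condition}: conditioned on $f=1$ (where $a = 1/2$), a set $C$ of size $2b$ contains only about $\ln 2 + 2(e-1)bp < 1$ ones in expectation. The constant $64$ should leave room for either version.

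The main obstacle is the loss term. The influences that appear are those of the restricted functions (after fixing $\xi_j = 1$ and some coordinates to $0$), not of $f$ itself, and they can exceed $\Inf_i(f)$. To relate them back, I would first write the loss exactly as
$$
\Pr\bigl[f(\xi) = 1 \text{ and some } \xi_i = 1,\ i \in R \setminus \{j\}, \text{ is needed}\bigr]
$$
under the conditional law given $\xi_j = 1$. I would then bound this by an expectation of the kind controlled by \eqref{eq:condition}, with $\cA = \{f = 1\}$ and $C = R$. If that does not close directly, the fallback is a greedy construction: add coordinates one by one, each time choosing one of maximal influence in the current restricted function. Along the way I would track that the total influence stays $\Omega(1/p)$ while fewer than $b \le (64p)^{-1}$ coordinates have been fixed.
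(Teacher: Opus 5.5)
Your pruning step is sound (it plays the role of the paper's ``take $2b$ elements, keep $b$ good ones''), and so is the use of \eqref{eq:var_influence}. However, the proposal stops exactly where the real work lies, and the tools you name there do not suffice. For $j$ to be good, the loss from zeroing $R'=R\setminus\{j\}$ must be at most the gain $(1-p)\Inf_j(f)$. That gain can be tiny: for a transitive-symmetric $f$ with total influence $\Theta(1/p)$, every $\Inf_j(f)$ is $\Theta(1/(p|U|))$. Applying \eqref{eq:condition} with $\cA=\{f=1\}$ only yields
\[
\Pr\bigl[f(\xi)=1,\ Z_{R'}(\xi)\ge 1 \mid \xi_j=1\bigr]\le \Pr[f(\xi)=1\mid \xi_j=1]\bigl(\ln 2+(e-1)(2b-1)p\bigr).
\]
This bound is of constant order and $\Inf_j(f)$ does not appear in it. Your alternative version likewise gives only $\mathbb{E}[f(\xi)\mid \xi_C=0]\ge c$ for a small constant $c$, not $\ge 1/2$. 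The greedy fallback never addresses the actual requirement, which must hold simultaneously for every $j\in Q$: zeroing the other elements of $Q$ must not destroy the pivotality of $j$. Separately, the sampling heuristic is false. Under influence-proportional sampling a drawn $j$ need not have influence comparable to the weighted average. Markov on the loss-to-gain ratio fails even with unconditional influences: the expected ratio is $(2b-1)pN\sum_i\Inf_i(f)^2/\bigl(\sum_i\Inf_i(f)\bigr)^2$, with $N$ the number of coordinates of positive influence, and this is unbounded when influences are uneven.

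The missing idea is to condition on the pivotal event $\{D_jf(\xi)=1\}$ rather than on $\{f(\xi)=1\}$. Split your loss according to the value of $f(\xi^{j\to0})$. It is then at most the sum of two terms:
\begin{itemize}
\item the loss from zeroing $R'$ in the function $x\mapsto f(x^{j\to0})$, which telescoping and \eqref{eq:exp_cond} bound by $p(1-p)^{-|R'|}\sum_{i\in R'}\Inf_i(f)$;
\item $\Inf_j(f)\Pr[Z_{R'}(\xi)\ge1\mid D_jf(\xi)=1]$.
\end{itemize}
So you need the last probability to be at most a constant such as $7/8$. This same estimate is the heart of the paper's Claim, where $Q\setminus\{j\}$ is zeroed first and $\xi_j=1$ is set afterwards. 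Applying \eqref{eq:condition} with $\cA=\{D_jf=1\}$ gives $a=\Inf_j(f)$, hence a term $\ln(1/\Inf_j(f))$ that is not small. It is absorbed only by choosing $R'$ at random from a pool $B$ of coordinates with influences in $(2^{-\ell},2^{-\ell+1}]$ and $|B|\ge 4b\ell$. Then $\frac{2b-1}{|B|-1}\bigl(\ell\ln 2+(e-1)|B|p\bigr)<7/16$ and Markov applies. Such a pool exists by dyadic pigeonholing on $\sum_i\Inf_i(f)\ge 1/(4p)$; the factor $\ell$ is exactly what pays for the logarithm. Since all influences in $B$ agree up to a factor $2$, the first term is also at most $4bp(1-p)^{-2b}\Inf_j(f)$, which is small.
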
 

The proof of Lemma \ref{lemma:main} is based on the notion of influence and equalities \eqref{eq:X_1} and \eqref{eq:X_0}. To this end, suppose we have a set $Q \subseteq U$ of size $b$ such that $\Inf_i(f) \approx \Inf_j(f)$ for every two $i, j \in Q$. Setting $\xi_j := 1$, by \eqref{eq:X_1}, causes the gain 
$$
    \mathbb{E}[f(\xi) \mid \xi_j = 1] - \mathbb{E}[f(\xi)] = (1 - p) \; \Inf_j(f) \approx \Inf_j(f).
$$
Setting $\xi_i := 0$ for $i \in Q \setminus \{j\}$, by \eqref{eq:X_0}, causes the loss
$$
    \mathbb{E}[f(\xi)] - \mathbb{E}[f(\xi) \mid \xi_i = 0] = p\; \Inf_i(f) \approx p \; \Inf_j(f).
$$
Therefore, we expect the total loss to be of the form
$$
    (b-1) \; p  \;\Inf_j(f) < \Inf_j(f).
$$
Since $bp \le 1 / 64$, we expect the total loss due to $\xi_i = 0$ for $i \in Q \setminus \{j\}$ to be smaller than the gain coming from $\xi_j = 1$. 

The problem in the previous arguments is that the conditioning in determining gain/loss has to be considered as a whole rather than individually. In particular, we want the conditioned gain not to drop too much, that is, we want
$$
    \mathbb{E}[f(\xi) \mid \xi_j = 1 \text{ and } \xi_i = 0 \text{ for $i \in Q \setminus \{j\}$}] - \mathbb{E}[f(\xi) \mid \xi_i = 0 \text{ for $i \in Q \setminus \{j\}$}] \ge c \;\Inf_j(f),
$$
for some absolute constant $c$. By using the fact that $bp$ can be made arbitrarily small, this suffices to ensure that the conditioned gain is still larger than the total loss. Bounding the total conditioned loss, on the other hand, is simpler as we only seek an upper bound, so \eqref{eq:exp_cond} can be used.

\begin{proof}[Proof of Lemma \ref{lemma:main}]
    Throughout the proof, we let $\xi \sim U_p$. If $b=1$, choose a coordinate of positive influence, which exists because $f$ is non-constant, and apply \eqref{eq:X_1}. We may therefore assume $b\ge2$. Since $\mathbb{E}[f(\xi)] = 1/2$ and $f$ is a Boolean function, we have $\Var[f(\xi)] = 1/4$. By \eqref{eq:var_influence}, we have 
    \begin{equation} \label{eq:contra}
        1/4 \le p(1-p) \sum_{i \in U} \Inf_i(f).
    \end{equation}
    For an integer $\ell \ge 1$, let $B_\ell = \{ i \in U \; : \; 2^{-\ell} < \Inf_i(f) \le 2^{-\ell + 1}\}$. Note that for some $\ell \ge1$ we then have $|B_\ell| \ge 4b \ell$. If this were not the case, we would have
    $$
        \sum_{i \in U} \Inf_i(f) \le \sum_{\ell=1}^{\infty} 2^{-\ell+1} |B_\ell| \le 4b \sum_{\ell=1}^{\infty} 2^{-\ell + 1}\ell = 16 b.
    $$
    As $b \le (64 p)^{-1}$, this would imply 
    $$
        p(1-p) \sum_{i \in U} \Inf_i(f) < p \cdot 16b \le 1/4,
    $$
    which contradicts \eqref{eq:contra}. 
    
    Consider some $\ell \ge 1$ such that $|B_\ell| \ge 4b \ell$, and set $B=B_\ell$. The following claim proves the key step in the outlined strategy, namely that the conditioned influence does not drop too much compared to the unconditioned one. For brevity, for a set $S \subseteq U$ we use the notation $\xi_S = 0$ to denote $\xi_i = 0$ for every $i \in S$.

    \begin{claim} \label{claim:key}
        There exists a set $Q \subseteq B_\ell$ of size $|Q| = b$ such that for every $j \in Q$ we have
        $$
            \mathbb{E}[D_j f(\xi) \mid \xi_{Q \setminus \{j\}} = 0] \ge \frac{\Inf_j(f)}{8 (1 - p)^{b-1}}.
        $$
    \end{claim}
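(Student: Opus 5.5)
The plan is to reduce the conditional statement to an unconditional one and then choose $Q$ at random inside $B=B_\ell$, keeping only its good elements. Since $D_j f(\xi)$ does not depend on $\xi_j$, and the event $\xi_{S}=0$ has probability $(1-p)^{|S|}$, we have for every $S\subseteq U\setminus\{j\}$
$$
  \mathbb{E}[D_j f(\xi) \mid \xi_S = 0] = \frac{\Pr[D_jf(\xi)=1 \text{ and } \xi_S=0]}{(1-p)^{|S|}}.
$$
So it suffices to find $Q\subseteq B$ with $|Q|=b$ such that for every $j\in Q$,
$$
  \Pr\bigl[D_jf(\xi)=1 \text{ and } Z_{Q\setminus\{j\}}(\xi)\ge 1\bigr] \le \tfrac78 \Inf_j(f).
$$
Call $j$ \emph{bad} for a set $Q'\ni j$ if this inequality fails with $Q'$ in place of $Q$. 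The event $Z_{Q'\setminus\{j\}}\ge1$ only shrinks when $Q'$ shrinks. Hence if $j$ is good for $Q'$, it is good for every $Q\subseteq Q'$ containing $j$. It therefore suffices to find a set $Q'\subseteq B$ of size $2b$ with fewer than $b$ bad elements, and to let $Q$ be any $b$ of its good elements. Note that $|B|\ge 4b\ell\ge 2b$.

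To bound the bad probability, fix $j\in B$. Let $\cA=\{x : D_jf(x)=1\}$, so that $a=\Inf_j(f)>2^{-\ell}$. For any $C\subseteq U\setminus\{j\}$, bound the indicator of $Z_C\ge1$ by $Z_C$:
$$
  \Pr[D_jf=1,\ Z_C\ge1]\le \Inf_j(f)\,\mathbb{E}[Z_C(\xi)\mid \xi\in\cA].
$$
Now pick $Q'$ uniformly at random among $2b$-subsets of $B$. Conditioned on $j\in Q'$, the set $C=Q'\setminus\{j\}$ is a uniform $(2b-1)$-subset of $B\setminus\{j\}$. Averaging the identity $\mathbb{E}[Z_C\mid\cA]=\sum_{i\in C}\Pr[\xi_i=1\mid\cA]$ over $C$ gives
$$
  \mathbb{E}_C\,\mathbb{E}[Z_C\mid \cA] = \frac{2b-1}{|B|-1}\,\mathbb{E}[Z_{B\setminus\{j\}}\mid\cA] \le \frac{2b-1}{|B|-1}\Bigl(\ell\ln 2 + (e-1)|B|p\Bigr),
$$
where the inequality is \eqref{eq:condition} together with $\ln(1/a)<\ell\ln2$.

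The numerics come next. Since $|B|\ge 4b\ell$ and $2\ell\ge1$, we get $\frac{2b-1}{|B|-1}\le\frac{1}{2\ell}$, so the first term is at most $\frac{\ln 2}{2}<0.35$. The second term is at most $(e-1)\frac{|B|}{|B|-1}(2b-1)p$. Using $bp\le 1/64$ and $|B|\ge 8$ (from $b\ge2$), it is below $0.06$. So the average is below $0.41<7/16$. By Markov's inequality, the probability over $C$ that $j$ is bad is below $\frac{7/16}{7/8}=\frac12$. Summing over $j$, the expected number of bad elements of $Q'$ is
$$
  \sum_{j\in B}\Pr[j\in Q']\cdot\Pr[j \text{ bad}\mid j\in Q'] < 2b\cdot\tfrac12=b.
$$
Hence some $Q'$ has fewer than $b$ bad elements, and therefore at least $b+1$ good ones.

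Take $Q$ to be $b$ good elements of this $Q'$. Then for each $j\in Q$,
$$
  \Pr[D_jf=1,\ \xi_{Q\setminus\{j\}}=0]\ge \tfrac18\Inf_j(f).
$$
Dividing by $(1-p)^{b-1}$ gives the claim. The main obstacle is the numerical step. Applying \eqref{eq:condition} to a fixed $C$ of size $b$ costs $\ln(1/a)\approx\ell\ln 2$, which is far too large. This is exactly why $B_\ell$ must have size about $4b\ell$: random sampling dilutes the $\ell\ln2$ term by the factor $b/|B|$. The oversample-then-discard trick is needed to pass from an average bound to a bound holding for every $j$; it works because goodness is inherited by subsets.
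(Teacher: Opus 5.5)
Your proof is correct and is essentially the paper's argument. It has the same reduction to $\Pr[\xi_{Q\setminus\{j\}}\neq 0 \mid D_jf(\xi)=1]\le 7/8$ and the same averaging of \eqref{eq:condition} over a random $(2b-1)$-subset of $B\setminus\{j\}$, followed by Markov at level $7/8$. It also uses the same trick of sampling a random $2b$-set, keeping $b$ good elements, and noting that goodness passes to subsets. (Phrasing goodness via the probability itself rather than the paper's $A_j(L')$ is only cosmetic.)

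One negligible arithmetic slip: using only $bp\le 1/64$ and $|B|\ge 8$, the second term is bounded by $(e-1)/28\approx 0.061$, not below $0.06$. The total is still below $0.41<7/16$, so the argument stands.
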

    \claimproofstart
        For $j \in B$, the bound \eqref{eq:condition} gives
        $$
            \mathbb{E} \left[Z_{B \setminus \{j\}}(\xi) \mid D_j f(\xi) = 1\right] \le \ell \log(2) + (e-1)|B|p.
        $$
        The key is to show that there exists $Q \subseteq U$ of size $|Q| = b$ such that, for every $j \in Q$, we have
        \begin{equation} \label{eq:Q}
            \Pr\left[ \xi_{Q \setminus \{j\}} = 0 \mid D_j f(\xi) = 1 \right] \ge 1/8.
        \end{equation}
        Indeed, once we have such a set, the claimed bound follows from Bayes' theorem and \eqref{eq:Q}: For any $j \in Q$ and $Q' = Q \setminus \{j\}$, we have
        $$
            \Pr[D_j f(\xi) = 1 \mid \xi_{Q'} = 0] = \frac{\Pr[ \xi_{Q'} = 0 \mid D_j f(\xi) = 1] \cdot \Pr[D_j f(\xi) = 1]}{\Pr[\xi_{Q'} = 0]} \stackrel{\eqref{eq:Q}}{\ge} \frac{\Inf_j(f)}{8 (1 - p)^{b-1}}.
        $$
        
        We now prove that there exists a set $Q$ which satisfies \eqref{eq:Q}. Consider some $j \in B$. For a set $L' \subseteq B \setminus \{j\}$ of size $|L'| = 2b - 1$, chosen uniformly at random, set
        $$
            A_j(L') := \mathbb{E}_{\xi}\left[ Z_{L'}(\xi) \mid D_j f(\xi) = 1 \right].
        $$
        Here we write $\xi$ in the subscript to signify that the expectation is over $\xi \sim U_p$. Note that $A_j(L')$ is indeed a random variable which depends only on $L'$ and not on $\xi$. By the linearity of expectation and symmetry,
        \begin{align*}
            \mathbb{E}_{L'}[A_j(L')] &= \frac{2b - 1}{|B| - 1} \sum_{i \in B \setminus \{j\}} \mathbb{E}_{\xi}[ \xi_i \mid D_j f(\xi) = 1] \\
            &= \frac{2b - 1}{|B| - 1} \mathbb{E}_{\xi}[ Z_{B \setminus \{j\}}(\xi) \mid D_j f(\xi) = 1 ] \\
            &\le \frac{2b - 1}{|B| - 1} (\ell \log(2) + (e-1)|B|p) < \frac{7}{16}.
        \end{align*}
        By Markov's inequality, we thus have $\Pr_{L'} [ A_j(L') > 7/8 ] \le 1/2$. 
        
        Consider now a set $L \subseteq B$ of size $|L| = 2b$, chosen uniformly at random. For $j \in B$, let $I_j$ be the indicator random variable for the event that $j \in L$ and $A_j(L \setminus \{j\}) \le 7/8$. Conditioned on $j\in L$, the set $L\setminus\{j\}$ is a uniformly random $(2b-1)$-subset of $B\setminus\{j\}$. Since $\Pr(j\in L)=2b/|B|$, we have $\mathbb{E}_{L}[I_j] \ge (2b/|B|)(1/2)=b/|B|$. Therefore, by the linearity of expectation we get
        $$
            \mathbb{E}_L \Bigl[ \sum_{j \in B} I_j \Bigr] \ge b.
        $$        
        In particular, there exists a choice of a $(2b)$-set $L$ such that for at least $b$ elements $j \in L$ it holds that $A_j(L \setminus \{j\}) \le 7/8$. Let $Q \subseteq L$ denote an arbitrary set of size $b$ consisting of such elements, and note that, for every $j\in Q$, Markov's inequality gives
        $$
            \Pr[\xi_{Q\setminus\{j\}}\ne0\mid D_j f(\xi)=1]
            \le \mathbb{E}[Z_{Q\setminus\{j\}}(\xi)\mid D_j f(\xi)=1]
            \le A_j(L\setminus\{j\})\le7/8.
        $$
        Thus $Q$ satisfies \eqref{eq:Q}.
    \claimproofend

    The previous claim provides a lower bound on the conditional gain coming from $\xi_j = 1$. We now bound the loss coming from setting all other coordinates to 0. To this end, let $j \in Q$ and let $(q_1, \ldots, q_{b-1})$ be an arbitrary ordering of the elements in $Q' = Q \setminus \{j\}$. By telescoping, we have
    \begin{align*}
        \mathbb{E}[f(\xi)] - \mathbb{E}[f(\xi) \mid \xi_{Q'} = 0] &= \sum_{i = 1}^{b-1} \left(\mathbb{E}[f(\xi) \mid \xi_{Q_{i-1}} = 0] - \mathbb{E}[f(\xi) \mid \xi_{Q_i} = 0]\right) \\
        &\stackrel{\eqref{eq:X_0}}{=} p \sum_{i = 1}^{b-1} \mathbb{E}[D_{q_i}f(\xi) \mid \xi_{Q_{i-1}} = 0],
    \end{align*}
    where $Q_i = \{q_1, \ldots, q_i\}$ (hence $Q_0 = \emptyset$). Applying \eqref{eq:exp_cond}, we get
    $$
        \mathbb{E}[D_{q_i}f(\xi) \mid Z_{Q_{i-1}}(\xi) = 0] \le \frac{\Inf_{q_i}(f)}{(1 - p)^{i-1}}.
    $$
    Using the fact that $\Inf_q(f) \le 2 \Inf_j(f)$ for every $q \in B_\ell$, this finally gives us
    \begin{equation} \label{eq:loss}          
        \mathbb{E}[f(\xi)] - \mathbb{E}[f(\xi) \mid \xi_{Q'} = 0] \le \frac{p}{(1 - p)^{b-2}} \sum_{i = 1}^{b-1} \Inf_{q_i}(f) \le \frac{2pb}{(1 - p)^{b-2}} \Inf_j(f) \le \frac{\Inf_j(f)}{8(1 - p)^{b-2}}.
    \end{equation}
    
    It remains to estimate $\mathbb{E}[f(\xi) \mid \xi_j = 1, \xi_{Q'} = 0]$. Write
    \begin{multline*}
        \mathbb{E}[f(\xi) \mid \xi_j = 1, \xi_{Q'} = 0] - \mathbb{E}[f(\xi)] = \\
        \left( \mathbb{E}[f(\xi) \mid \xi_j = 1, \xi_{Q'} = 0] - \mathbb{E}[f(\xi) \mid \xi_{Q'} = 0]\right) - \left(\mathbb{E}[f(\xi)] - \mathbb{E}[f(\xi) \mid \xi_{Q'} = 0] \right).
    \end{multline*}
    Applying Claim \ref{claim:key} and \eqref{eq:X_1} to bound the first term from below, and \eqref{eq:loss} to bound the second from above, we conclude
    $$
        \mathbb{E}[f(\xi) \mid \xi_j = 1, \xi_{Q'} = 0] - \mathbb{E}[f(\xi)] \ge (1 - p) \frac{\Inf_j(f)}{8 (1 - p)^{b-1}} - \frac{\Inf_j(f)}{8(1 - p)^{b-2}} \ge 0.
    $$
    This concludes the proof.
\end{proof}

Armed with Lemma \ref{lemma:main}, the proof of Theorem \ref{thm:main} is straightforward.

\begin{proof}[Proof of Theorem \ref{thm:main}]
    Let $p_0 = p_c(\cF)$. If $p_0\ge1/128$, then $b(\cF)\ge1\ge(128p_0)^{-1}$, since the first player wins with bias $1$. Otherwise, set
    $$
        b=\left\lfloor\frac{1}{64p_0}\right\rfloor\ge\frac{1}{128p_0}.
    $$
    We show that the first player wins with bias $b$. Let $S \subseteq X$ denote the set of elements claimed by the second player, and let $D \subseteq X$ denote discarded elements. The first player maintains that at the beginning of each round either $S \in \cF$ or
    $$
        \Pr[\xi \in \cF \mid S \subseteq \xi, D \cap \xi = \emptyset] \ge 1/2,
    $$
    where we think of $\xi \sim \mu_{p_0}$ as being the characteristic vector of a set. By the definition of $p_c(\cF)$, this holds at the beginning of the game. Suppose that it holds for some $S$ and $D$ obtained after playing the game for a number of rounds.

    If $S \in \cF$, then because $\cF$ is an increasing property the first player can simply offer any $b$ elements (or terminate the game, if there are fewer than $b$ available). Otherwise, let $U := X \setminus (D \cup S)$ and let $f: \{0,1\}^U \to \{0,1\}$ be defined as $f(T) = 1$ if and only if $S \cup T \in \cF$, where we identify a vector with its support $T\subseteq U$. By the inductive hypothesis, we have $\mathbb{E}[f(\xi)] \ge 1/2$ for $\xi \sim U_{p_0}$. As $\mathbb{E}_p[f(\xi)]$ is continuous and increasing in $p$, and $f(0) = 0$, there exists $0 < p \le p_0$ such that $\mathbb{E}_p[f(\xi)] = 1/2$ for $\xi \sim U_p$. 
    Since $bp\le bp_0\le1/64$, Lemma \ref{lemma:main} gives a set $Q \subseteq U$ of size $|Q| = b$. This is the set that the first player picks. Whichever element $q \in Q$ the second player chooses, the sets $S' = S \cup \{q\}$ and $D' = D \cup (Q \setminus \{q\})$ satisfy
    $$
        \Pr_{\xi\sim\mu_p}[\xi \in \cF \mid S' \subseteq \xi, D' \cap \xi = \emptyset] \ge 1/2.
    $$
    The unclaimed coordinates still have a product distribution. Since $\cF$ is increasing and $p\le p_0$, the same inequality holds with $p_0$ in place of $p$, preserving the invariant. As the game finishes after finitely many rounds, this implies that at some point we must have $S \in \cF$.
\end{proof}

\bibliographystyle{abbrv}
\bibliography{references}

\end{document}